\newif\ifnormopen\normopenfalse
\def\Bin{\mathrm{Bin}}
\title{The Probability to Hit Every Bin\texorpdfstring{\\}{ }with a Linear Number of Balls}
\titlerunning{The probability to hit every bin with a linear number of balls}
\author{Stefan Walzer}{Karlsruhe Institute of Technology}{stefan.walzer@kit.edu}{https://orcid.org/0000-0002-6477-0106}{}
\authorrunning{Stefan Walzer}
\keywords{Balls into bins, Multinomial distribution, Poissonisation, Tail bound} 
\begin{document}
\maketitle
\begin{abstract}
    Assume that $2n$ balls are thrown independently and uniformly at random into $n$ bins. We consider the unlikely event $E$ that every bin receives at least one ball, showing that $\Pr[E] = Θ(b^n)$ where $b ≈ 0.836$. Note that, due to correlations, $b$ is \emph{not} simply the probability that any single bin receives at least one ball.
    More generally, we consider the event that throwing $αn$ balls into $n$ bins results in at least $d$ balls in each bin.
\end{abstract}

\section{Introduction}

Let $n,d ∈ ℕ$, and $α ∈ ℝ$ with $α ≥ d$. Let $E$ be the event that throwing $αn$ balls into $n$ bins results in at least $d$ balls in every bin. More formally, $c₁,…,c_{αn} \sim 𝒰(\{1,…,n\})$ are independent random variables where $c_j$ denotes the bin of the $i$th ball for $1 ≤ j ≤ αn$. Then $X_i = |\{ j ∈ \{1,…,αn\} \mid c_j = i\}|$ is the load of the $i$th bin for $1 ≤ i ≤ n$ and $E = \{\min_{1 ≤ i ≤ n} X_i ≥ d\}$.

To state the main result we require a distribution $Φ(α,d)$ that is a Poisson distribution truncated to values $≥ d$ and tuned to have expectation $α$. Formally $Z \sim Φ(α,d)$ satisfies
\begin{equation}
     \Pr[Z = i] = \begin{cases}
        0 & i < d\\
        \frac{1}{ζ}\frac{λ^{i-d}}{i!}
    \end{cases}\label{eq:def-Φ}
\end{equation}
where $ζ = \sum_{i ≥ d} \frac{λ^{i-d}}{i!}$ is a normalisation factor and $λ = λ(α,d)$ is tuned such that $𝔼[Z] = α$.
\begin{theorem}\ 
    \label{thm:main}
    \begin{enumerate}[(i)]
        • If $α$ and $d$ are constants with $α > d$ then
        $\Pr[E] = Θ(b^n) \text{ where } b = \frac{α^αζ}{e^αλ^{α-d}}$.
        • For $α = d$ (not necessarily constant)
        $\Pr[E] = Θ(b^n \sqrt{dn}) \text{ where } b = \frac{d^d}{e^d d!}$.
    \end{enumerate}
\end{theorem}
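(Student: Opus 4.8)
The plan is to use Poissonisation. For any rate $\mu>0$, if $N_1,\dots,N_n$ are i.i.d.\ $\Po(\mu)$ and $S=\sum_i N_i$, then conditioning on $S=m$ (with $m=\alpha n$) recovers exactly the multinomial law of the loads $(X_1,\dots,X_n)$; hence $\Pr[E]=\Pr[\forall i\colon N_i\ge d \mid S=m]$, independently of $\mu$. I would fix $\mu=\lambda$, the parameter tuned in the definition of $\Phi(\alpha,d)$, because then the truncated variable $(N_i \mid N_i\ge d)$ is exactly distributed as $\Phi(\alpha,d)$ and in particular has mean $\alpha$. Writing $p=\Pr[N_1\ge d]=e^{-\lambda}\lambda^d\zeta$ and letting $S'=\sum_i Z_i$ with $Z_i$ i.i.d.\ $\Phi(\alpha,d)$, independence of the $N_i$ gives the clean decomposition
\[
\Pr[E]=\frac{\Pr[\forall i\colon N_i\ge d,\ S=m]}{\Pr[S=m]}=\frac{p^n\,\Pr[S'=m]}{\Pr[S=m]}.
\]

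Both probabilities in the last fraction are point probabilities of integer-valued sums evaluated at $m=\alpha n$, and the key observation is that they sit at very different places relative to their means. The denominator is a single Poisson evaluation, $S\sim\Po(n\lambda)$, so $\Pr[S=m]=e^{-n\lambda}(n\lambda)^{\alpha n}/(\alpha n)!$, and Stirling's formula turns this into $\tfrac{1}{\sqrt{2\pi\alpha n}}\big(\lambda^\alpha e^{\alpha-\lambda}/\alpha^\alpha\big)^n(1+o(1))$; here $m$ is far from the mean $n\lambda$, which is exactly what produces the exponential factor. The numerator instead sits precisely at the mean of $S'$, since $\mathbb{E}[S']=\alpha n=m$ by the tuning of $\lambda$. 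At this point I would invoke a local central limit theorem: for $\alpha>d$ we have $\lambda>0$, so $\Phi(\alpha,d)$ is a genuinely non-degenerate lattice distribution of span $1$ with finite variance $\sigma^2$, and the local CLT yields $\Pr[S'=m]=\tfrac{1}{\sqrt{2\pi n\sigma^2}}(1+o(1))$.

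Combining the three estimates, the two $\Theta(1/\sqrt n)$ prefactors cancel up to the constant $\sqrt{\alpha/\sigma^2}$, leaving $\Pr[E]=\Theta(b^n)$ with
\[
b=\frac{p\,\alpha^\alpha}{\lambda^\alpha e^{\alpha-\lambda}}=\frac{e^{-\lambda}\lambda^d\zeta\,\alpha^\alpha}{\lambda^\alpha e^{\alpha-\lambda}}=\frac{\alpha^\alpha\zeta}{e^\alpha\lambda^{\alpha-d}},
\]
which is exactly the base claimed in part~(i).

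For part~(ii) the scheme degenerates: forcing the mean of a Poisson truncated to $\{d,d+1,\dots\}$ down to its minimal value $d$ requires $\lambda\to0$, so $\Phi(\alpha,d)$ collapses to the point mass at $d$ and the local CLT no longer applies. But this degeneracy also makes the event transparent: when $\alpha=d$, the conditions $\sum_i X_i=dn$ and $\min_i X_i\ge d$ force every bin to hold exactly $d$ balls, so $\Pr[E]=\binom{dn}{d,\dots,d}n^{-dn}=\frac{(dn)!}{(d!)^n n^{dn}}$, and a direct application of Stirling gives $\Pr[E]=\Theta\!\big(\sqrt{dn}\,b^n\big)$ with $b=d^d/(e^d d!)$, consistent with taking $\lambda\to0$ and $\zeta\to 1/d!$ in the formula from part~(i). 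I expect the main obstacle to be the local CLT step: one must verify the span and integrability hypotheses and, for a genuine two-sided $\Theta$-bound rather than an asymptotic, control the error term in the local limit theorem; in the regime of part~(ii), where $d$ may grow with $n$, the analogous obstacle is to apply Stirling with error bounds uniform in $d$ so that the $\sqrt{dn}$ factor is pinned down exactly.
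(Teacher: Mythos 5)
Your proposal is correct, and its core decomposition is the same as the paper's: both identify the event $E$ with a product of truncated Poisson variables $\Phi(\alpha,d)$, tuned via $\lambda$ to have mean $\alpha$, conditioned on their sum hitting $\alpha n$; your identity $\Pr[E]=p^n\,\Pr[S'=\alpha n]/\Pr[S=\alpha n]$ with $p=e^{-\lambda}\lambda^d\zeta$ is an algebraic repackaging of the paper's manipulation of $\sum_{\vec{x}}1/(x_1!\cdots x_n!)$, and your Stirling evaluation of the Poisson point probability $\Pr[S=\alpha n]$ plays the role of the paper's Stirling approximation of $(\alpha n)!$. Where you genuinely diverge is the key analytic lemma: you establish $\Pr[S'=\alpha n]=\Theta(1/\sqrt{n})$ by the classical local central limit theorem for lattice distributions (span $1$, finite variance, evaluation exactly at the mean), whereas the paper proves it via log-concavity of the truncated Poisson pmf, combining a theorem of Bobkov--Marsiglietti--Melbourne ($\hat{p}=\Theta(1/(1+\sigma))$ for the peak probability) with a hand-proved comparison $p_\mu>\hat{p}/e$ between the value at the mean and the mode. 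Your route is more off-the-shelf and even delivers the sharp asymptotic constant $1/\sqrt{2\pi n\sigma^2}$; the paper's route avoids characteristic-function machinery entirely and yields explicit non-asymptotic constants valid for every $n$. Your concerns at the end are well placed but benign: for part (i) with $\alpha,d$ constant the asymptotic form of the local CLT already gives a two-sided $\Theta$-bound for large $n$ (and $\Pr[S'=\alpha n]>0$ for all $n$ since $R\neq\varnothing$), and for part (ii) Stirling's bounds $m!=\Theta(\sqrt{m}\,(m/e)^m)$ hold with absolute constants, so uniformity in $d$ is automatic; your part (ii) argument is identical to the paper's.
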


In \cref{app:tables} we provide code for computing $b = b(α,d)$ and tabulate some values.

\subparagraph{Related Work and Motivation}

\def\Xmin{\check{X}}
\def\Xmax{\hat{X}}
In the same setting, let $\Xmin$ be the load of the least loaded bin and $\Xmax$ the load of the most loaded bin. A lot is known about these random variables.

For instance, if $α = 1$ then $\Xmax = \frac{\log n}{\log \log n}·(1+o(1))$ with high probability \cite{Gonnet:BallsIntoBins:1981}. More general results are found in \cite{RS:Balls-Into-Bins:1998} where $α$ may depend on $n$. There are also works on computing $\Pr[\Xmax = d]$ and $\Pr[\Xmin = d]$ exactly \cite{BCO:ExactyMultinomialMin:2019}.

Our focus on $\Xmin$ for \emph{constant} $α$ may seem strange because $\Xmin$ is zero with high probability. \Cref{thm:main} merely determines the base of the exponential function that describes the speed with which $\Pr[E] = \Pr[\Xmin ≥ d]$ converges to zero for $n → ∞$.

The author stumbled upon this problem in the context of minimal perfect hash functions (a randomised data structure). The probability $\Pr[\Xmin = \Xmax = α]$ for $α ∈ ℕ$ appears in space-lower bounds for minimal $α$-perfect hash functions. The more difficult case of $\Pr[\Xmin ≥ 1]$ for $α = 2$ was useful for analysing an improved minimal perfect hash function based on cuckoo hashing \cite{LSW:ShockHash:2024}. 
Given that balls-into-bins problems pop up in many places, the author beliefs that others might find the result useful.

\section{Simple Considerations}

\subparagraph{An upper bound.}
Let $E_i$ for $1 ≤ i ≤ n$ be the event that the $i$th bin is non-empty. Since $X_i \sim \Bin(n,\frac 1n)$ we have
$\Pr[E_i] = 1-\Pr[X_i = 0] = 1 - (1-\frac 1n)^{αn} \stackrel{n → ∞}{\longrightarrow} 1-e^α$.

This suggests, falsely, that $\Pr[E] = \Pr[\bigcap_{i=1}^n E_i] \stackrel{?!}{≈} \Pr[E₁]^n = (1-e^{-α})^n$. In truth, the events $(E_i)_{1 ≤ i ≤ n}$ are negatively associated and the relation in question is actually “$\ll$” and $1-e^{-α}$ is strictly larger than the value of $b$ attained from \cref{thm:main}.

\begin{proof}[Proof of \cref{thm:main} (ii).]
If $α = d$ then $E$ occurs if and only if every bin receives \emph{exactly} $d$ balls. The probability mass function of the multinomial distribution and Stirlings Approximation of $(dn)!$ gives
\begin{align*}
    \Pr[E] &= \Pr[(X₁,…,Xₙ) = (d,…,d)] = \frac{(dn)!}{(d!)^n} · n^{-dn}\\
    &= \frac{Θ\big((dn)^{dn}·e^{-nd}\sqrt{nd}\big)}{(d!)^n n^{dn}}
    = Θ\bigg( \Big(\frac{d^{d}}{e^{d}d!}\Big)^n \sqrt{nd} \bigg).\qedhere
\end{align*}
\end{proof}
\section{Proof of Theorem \ref{thm:main} (i): The Base of the Exponential}
\label{sec:proof}
\subparagraph{Proof idea.}
The standard technique of Poissonisation exploits that the multinomial distribution of $(X₁,…,Xₙ)$ can be attained by taking independent Poisson random variables $Y₁,…,Yₙ$ and conditioning them on $\sum_{i = 1}^n Y_i = αn$. We use Poissonisation with a twist.

The idea is illustrated in \cref{fig:proof-idea}. An outcome $(x₁,…,xₙ) ∈ ℕⁿ$  contributing to $E$ must satisfy two conditions: The sum $x₁+…+xₙ$ must be $αn$ and each $x_i$ must be at least $d$. The vector $(X₁,…,Xₙ)$ follows a multinomial distribution and automatically satisfies the sum condition, but not the minimum condition. The proof considers a sequence $Z₁,…,Z_n \sim Φ(α,d)$ of independent truncated Poisson random variables. The vector $(Z₁,…,Z_n)$ automatically satisfies the minimum condition, but not the sum condition. This amounts to a mathematically simpler way to capture the outcomes we want.

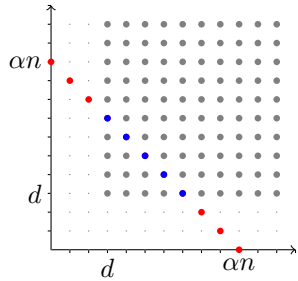
\begin{figure}[htbp]
    \begin{minipage}[c]{0.3\textwidth}
        \begin{tikzpicture}[scale=0.25]
            \def\axLen{13}
            \draw[->] (0,0) -- (\axLen,0);
            \draw[->] (0,0) -- (0,\axLen);
            \foreach \tik in {1,...,\axLen}{
                \draw (0,\tik) -- ++ (-5pt,0);
                \draw (\tik,0) -- ++ (0,-5pt);
            }
            \def\an{10}
            \def\dp{2}
            \node[left] at (0,\an) {$αn$};
            \node[below] at (\an,0) {$αn$};
            \node[left] at (0,\dp+1) {$d$};
            \node[below] at (\dp+1,0) {$d$};
            \foreach \x in {0,...,12}{
                \foreach \y in {0,...,12}{
                    \fill[gray] (\x,\y) circle (1pt);
                    \pgfmathsetmacro{\sum}{int(\x+\y)}
                    \ifnum\sum=\an
                        \fill[red] (\x,\y) circle (5pt);
                    \fi
                    \ifnum\x>\dp
                        \ifnum\y>\dp
                            \ifnum\sum=\an
                                \fill[blue] (\x,\y) circle (5pt);
                            \else
                                \fill[gray] (\x,\y) circle (5pt);
                            \fi
                        \fi
                    \fi
                }
            }
        \end{tikzpicture}
    \end{minipage}~\qquad~
    \begin{minipage}[c]{0.6\textwidth}
        \caption{Let $n = 2$, $α = 5$ and $d = 3$. The multinomial distribution $(X₁,X₂)$ automatically satisfies $X₁+X₂ = αn$ (diagonal line). A pair $(Z₁,Z₂)$ of truncated Poisson random variables automatically satisfies $Z₁ ≥ d$ and $Z₂ ≥ d$ (gray). This gives us two perspectives on the outcomes relevant for $E$ (blue), which satisfy both conditions.}
        \label{fig:proof-idea}
    \end{minipage}
\end{figure}

\begin{proof}[Proof of \cref{thm:main} (i).]
Let $R$ denote the set of possible outcomes of $\vec{X} = (X₁,…,Xₙ)$ that are consistent with $E$, meaning
\[ R = \{ \vec{x} ∈ (ℕ \setminus \{0,1,…,d-1\})^n \mid \sum_{i = 1}^n x_i = αn \}.\]
Using that $(X₁,…,Xₙ)$ has multinomial distribution gives
\begin{align}
    \Pr[E]
    &= \Pr[\vec{X} ∈ R] = \sum_{\vec{x} ∈ R} \Pr[\vec{X} = \vec{x}]
     = \sum_{\vec{x} ∈ R} \binom{αn}{x₁\,…\,xₙ} n^{-αn}\notag\\
    &= \sum_{\vec{x} ∈ R} \frac{(αn)!}{x₁!·…·xₙ!} n^{-αn}
     = \frac{(αn)!}{n^{αn}} \sum_{\vec{x} ∈ R} \frac{1}{x₁!·…·xₙ!}.\label{eq:prob-E}
\end{align}
Now consider independent $Z₁,…,Z_n \sim Φ(α,d)$ for $Φ(α,d)$ as defined in \cref{eq:def-Φ}. Let $\vec{Z} = (Z₁,…,Zₙ)$ and $N_Z = \sum_{i = 1}^n Z_i$. By construction the events $\vec{Z} ∈ R$ and $N_Z = αn$ are equivalent. For any $\vec{x} ∈ R$ we can compute
\begin{align*}
    \Pr[\vec{Z} &= \vec{x} \mid N_Z = αn]
     = \frac{\Pr[\vec{Z} = \vec{x} ∧ N_Z = αn]}{\Pr[N_Z = αn]}
     = \frac{\Pr[\vec{Z} = \vec{x}]}{\Pr[N_Z = αn]}
     = \frac{\prod_{i = 1}^n \Pr[Z_i = x_i]}{\Pr[N_Z = αn]}\\
    &= \frac{\prod_{i = 1}^n \frac{1}{ζ} · \frac{λ^{x_i -d}}{x_i!}}{\Pr[N_Z = αn]}
     = \frac{λ^{αn-dn}}{ζ^n\Pr[N_Z = αn]} \frac{1}{x₁!·…·xₙ!}.
\end{align*}
By summing this equation over all $\vec{x} ∈ R$ we get
\[
    1 = \frac{λ^{αn-dn}}{ζ^n\Pr[N_Z = αn]} \sum_{\vec{x} ∈ R}\frac{1}{x₁!·…·xₙ!}
\]
We rearrange this equation for $\sum_{\vec{x} ∈ R}\frac{1}{x₁!·…·xₙ!}$ and plug the result into \cref{eq:prob-E}. We now assume that $α$ is constant, we use Stirling's approximation of $(αn)!$ and we use that $\Pr[N_Z = αn] = Θ(1/\sqrt{n})$, which we prove in \cref{lem:prob-N_Z=αn}. This gives
\begin{align*}
    \Pr[E] &= \frac{(αn)!}{n^{αn}} \frac{ζ^n\Pr[N_Z = αn]}{λ^{αn-dn}}
    = \frac{(αn)^{αn}e^{-αn}Θ(\sqrt{n})ζ^n Θ(1/\sqrt{n})}{n^{αn} λ^{αn-dn}}
    = \Big(\frac{α^{α}ζ}{e^{α}λ^{α-d}}\Big)^n·Θ(1).
\end{align*}
This concludes the proof of \cref{thm:main}, except for the proof of \cref{lem:prob-N_Z=αn} given below.
\end{proof}

\section{Proof of Lemma \ref{lem:prob-N_Z=αn} using Log-Concavity}
\def\hp{\mathbf{\hat{\text{$p$}}}}
\def\hq{\mathbf{\hat{\text{$q$}}}}
\def\hi{\mathbf{\hat{\text{\itshape\kern-0.5pt\i\kern0.5pt}}}}

A distribution and its probability mass function (pmf) $(p_i)_{i ∈ ℤ}$ is \emph{log-concave} \cite{SW:Log-Concavity-Review:2014} if its support $\{i ∈ ℤ \mid p_i > 0\}$ is connected and $p_i² ≥ p_{i-1}·p_{i+1}$ for all $i ∈ ℤ$. The intuition, which is valid if $p_i > 0$ for all $i ∈ ℤ$, is that $i ↦ \log(p_i)$ is a concave function, meaning its discrete derivative $\log(p_{i+1}) - \log(p_i) = \log(p_{i+1} / p_i)$ is non-increasing, i.e.\ $p_i / p_{i-1} ≥ p_{i+1} / p_i$.

An example is the Poisson distribution with parameter $λ$ since its support is $ℕ₀$ and for $i ∈ ℕ$ the quotient $p_i / p_{i-1} = λ / i$ is decreasing. The truncated Poisson distribution $Φ(α,d)$ from \cref{eq:def-Φ} inherits this property. This is useful because:

\begin{lemma}[{\cite[Theorem 4.1]{SW:Log-Concavity-Review:2014}}]
    \label{lem:log-concavity-preserved}
    Log-concavity is preserved under convolution.
\end{lemma}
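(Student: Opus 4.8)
The plan is to encode log-concavity as total positivity of an associated Toeplitz matrix and then exploit that convolution is nothing but matrix multiplication. To a nonnegative sequence $(a_i)_{i \in \mathbb{Z}}$ I associate the bi-infinite Toeplitz matrix $A = (a_{i-j})_{i,j \in \mathbb{Z}}$. The reformulation I would first establish is that a sequence with connected support is log-concave if and only if $A$ is \emph{totally positive of order $2$}, meaning that every $2\times 2$ minor $a_{i_1-j_1}a_{i_2-j_2} - a_{i_1-j_2}a_{i_2-j_1} \ge 0$ for all row indices $i_1 < i_2$ and column indices $j_1 < j_2$. One direction is immediate: specialising to $i_2 = i_1+1$, $j_2 = j_1+1$ and writing $i = i_1 - j_1$ turns the minor into $a_i^2 - a_{i-1}a_{i+1} \ge 0$.

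Granting the reformulation, the argument is short. For $r = p * q$ a direct index shift gives $r_{i-j} = \sum_m p_{i-m}\,q_{m-j}$, i.e.\ the Toeplitz matrices satisfy $R = P Q$. Since $p$ and $q$ are log-concave, $P$ and $Q$ are totally positive of order $2$, and the Cauchy--Binet formula writes each $2\times 2$ minor of $R = PQ$ as $\sum_{k_1 < k_2} m_P(k_1,k_2)\, m_Q(k_1,k_2)$, where $m_P$ and $m_Q$ are the corresponding $2\times 2$ minors of $P$ and $Q$. This is a sum of products of nonnegative numbers, hence nonnegative, so $R$ is totally positive of order $2$; the easy direction of the reformulation then yields $r_i^2 \ge r_{i-1}r_{i+1}$. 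Since $p$ and $q$ are summable, all these double series converge absolutely, which legitimises the rearrangement behind Cauchy--Binet. Connectedness of $\mathrm{supp}(r)$ is a separate and easy point: it equals the sumset $\mathrm{supp}(p) + \mathrm{supp}(q)$ of two integer intervals, which is again an interval, every value of which is attained with positive probability.

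The main obstacle is the nontrivial direction of the reformulation: deducing that \emph{all} $2\times 2$ minors are nonnegative from the three-term inequalities alone. Parametrising the minor by $u = i_2 - i_1 \ge 1$, $v = j_2 - j_1 \ge 1$ and $c = i_1 - j_1$, nonnegativity reads $a_c\,a_{c+u-v} \ge a_{c-v}\,a_{c+u}$; here the two index pairs have equal sum, and the pair on the right spans the wider interval. I would prove this by moving the endpoints inward one step at a time, reducing to the single inequality $a_s a_t \ge a_{s-1}a_{t+1}$ for $s \le t$. When all four entries are positive this is $a_s/a_{s-1} \ge a_{t+1}/a_t$, which holds because log-concavity makes the ratios $a_k/a_{k-1}$ non-increasing and $s \le t+1$. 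The case where an entry vanishes is exactly where connected support is needed: if $a_{s-1}a_{t+1} > 0$ then $s-1$ and $t+1$ lie in the support, hence so does the whole interval between them, forcing $a_s, a_t > 0$ and reducing to the previous case; otherwise the right-hand side is $0$ and the inequality is trivial. Getting this bookkeeping with zeros right is the only genuinely delicate point.
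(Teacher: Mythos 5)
The paper does not prove this lemma at all: it is imported verbatim from the literature (Theorem~4.1 of the cited review by Saumard and Wellner), so there is no in-paper argument to compare against. Your proposal supplies an actual proof, and it is correct; it is in fact the classical total-positivity argument going back to Fekete and Karlin, which is essentially the proof underlying the cited reference. The two key steps both check out: (1) the equivalence between log-concavity with connected support and $\mathrm{TP}_2$ of the Toeplitz matrix, where your reduction of a general minor $a_c a_{c+u-v} \ge a_{c-v}a_{c+u}$ to the one-step inequality $a_s a_t \ge a_{s-1}a_{t+1}$ ($s \le t$) via monotone ratios is sound, and your handling of zeros correctly isolates the only place connectedness of the support is needed (if the outer product is positive, the whole interval lies in the support and all intermediate ratios are defined); and (2) the Cauchy--Binet expansion of a $2\times 2$ minor of $R = PQ$ as a nonnegative sum, where your remark that summability of the pmfs justifies the rearrangement for bi-infinite matrices is the right thing to say. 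The separate observation that $\mathrm{supp}(p*q)$ is the sumset of two integer intervals, hence an interval, completes the definition of log-concavity. What your approach buys over the paper's citation is self-containedness and a slightly stronger intermediate fact ($\mathrm{TP}_2$ of the convolution, not just the three-term inequality); the cost is the bookkeeping with vanishing entries, which you have identified and handled correctly.
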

As in \cref{sec:proof} let $Z₁,…,Z_n \sim Φ(α,d)$ and $N_Z = \sum_{i = 1}^n Z_i$.
\begin{corollary}
    \label{cor:N_Z-log-concave}
    The distribution of $N_Z$ is log-concave.
\end{corollary}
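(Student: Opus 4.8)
The plan is to combine two facts: each summand $Z_i \sim Φ(α,d)$ is itself log-concave, and $N_Z$ is an $n$-fold self-convolution of this single distribution, so that \cref{lem:log-concavity-preserved} applies directly.

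First I would confirm that the pmf $(p_i)$ of $Φ(α,d)$ from \cref{eq:def-Φ} is log-concave. Its support is $\{d,d+1,d+2,…\}$, which is connected, so it remains to check $p_i² ≥ p_{i-1}p_{i+1}$ for every $i$. This is immediate wherever the right-hand side vanishes; in particular at the boundary $i = d$ we have $p_{d-1} = 0 < p_d²$. For $i > d$ all three values are positive and the ratio computation $p_i/p_{i-1} = λ/i$ shows that $i ↦ p_i/p_{i-1}$ is strictly decreasing, hence $p_i/p_{i-1} > p_{i+1}/p_i$, which rearranges to the desired inequality.

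Second, since $Z₁,…,Z_n$ are independent and identically distributed, the pmf of $N_Z = \sum_{i=1}^n Z_i$ is the convolution of $n$ copies of the pmf of $Φ(α,d)$. Applying \cref{lem:log-concavity-preserved} repeatedly — formally by induction on $n$, with the log-concavity of a single $Z_i$ as the base case — yields that the $n$-fold convolution, i.e.\ the distribution of $N_Z$, is log-concave.

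I do not expect a real obstacle, since the statement is essentially an immediate corollary once the individual summand is known to be log-concave. The only point needing care is the boundary at $i = d$: there the convenient identity $p_i/p_{i-1} = λ/i$ is unavailable (because $p_{d-1} = 0$), so the defining inequality must be verified separately, which is trivial as the right-hand side is zero.
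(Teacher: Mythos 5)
Your proposal is correct and follows the same route as the paper: establish log-concavity of the single truncated Poisson pmf $Φ(α,d)$ and then apply \cref{lem:log-concavity-preserved} to the $n$-fold convolution. Your explicit check of the boundary case $i = d$ (where $p_{d-1} = 0$ makes the inequality trivial) is a welcome detail that the paper only gestures at by saying the truncated distribution "inherits" log-concavity from the Poisson distribution.
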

\begin{proof}
    The pmf of $N_Z$ arises as an $n$-fold convolution of the pmf of $Φ(α,d)$, which is log-concave. Hence \cref{lem:log-concavity-preserved} applies.
\end{proof}
For the rest of this section, assume $(p_i)_{i∈ℤ}$ is a log-concave pmf, $\hp = \max_{i ∈ ℤ} p_i$ is the peak probability, $μ$ the expectation\footnotemark\addtocounter{footnote}{-1}, $σ²$ the variance\footnote{Guaranteed to exist for log-concave distributions.} and $p_μ = \max \{p_{⌊μ⌋}, p_{⌈μ⌉}\}$. If $μ ∈ ℤ$ then $p_μ$ is the probability that \emph{exactly} the expectation is attained.

We need two Lemmas regarding $\hp$ and $p_μ$ for log-concave distributions, the first of which we import from the literature.

\begin{lemma}[{\cite[Theorem 1.1]{BMM:Concentration-LogConcave:2021}}]
    \label{lem:hp-large}
    $\hp = Θ(1/(1+σ))$.
\end{lemma}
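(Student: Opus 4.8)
The statement is two-sided, so I would prove the matching bounds $\hp = Ω(1/(1+σ))$ and $\hp = O(1/(1+σ))$ separately. The lower bound says the peak cannot be too small; I expect it to be elementary and to not even use log-concavity, only the existence of $σ²$ (which log-concavity guarantees, cf.\ the footnote). The upper bound says the peak cannot be too large relative to the spread, and this is the direction where log-concavity is indispensable: a \emph{non}-log-concave distribution with mass $1-ε$ at one point and a far-away $ε$ tail would have $\hp ≈ 1$ yet arbitrarily large $σ$, violating the claimed bound.

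For the lower bound I would combine Chebyshev's inequality with a counting argument. By Chebyshev, $\Pr[|X-μ| < 2σ] ≥ \tfrac34$. The open interval $(μ-2σ,\,μ+2σ)$ contains at most $4σ+1$ integers, each carrying probability at most $\hp$, so $\tfrac34 ≤ (4σ+1)·\hp$ and hence $\hp ≥ \tfrac{3}{4(4σ+1)} = Ω(1/(1+σ))$.

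For the upper bound, note that $\hp ≤ 1$ always, so it suffices to prove $\hp·σ = O(1)$, equivalently $σ² = O(1/\hp²)$. Let $m$ be a mode, so $p_m = \hp$. Since the distribution is log-concave, the ratios $p_{i+1}/p_i$ are non-increasing, and the variance is bounded by $\sum_k k²·p_{m+k}$ (recentring at the mode only increases the second moment). I would split this sum at the half-maximum level. The ``core'' $\{k : p_{m+k} ≥ \hp/2\}$ is an interval of length $O(1/\hp)$, since its total mass is at most $1$, and hence contributes $O\big(\hp·(1/\hp)^3\big) = O(1/\hp²)$. The remaining ``tail'' I would control by comparing the log-concave sequence against the slowest-decaying geometric sequence consistent with peak $\hp$ and total mass $≤ 1$, whose second moment is likewise $O(1/\hp²)$.

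The delicate point, and the step I expect to be the main obstacle, is exactly this tail comparison: a single geometric bound based on the ratio $p_{m+1}/p_m$ is too weak when the distribution has a flat top near the mode, so one must argue via the extremal geometric profile (or an equivalent rearrangement/monotonicity argument) to push through $σ² = O(1/\hp²)$. The lower bound, by contrast, I expect to be entirely routine.
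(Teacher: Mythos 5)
The paper does not prove this lemma at all: it is imported verbatim as Theorem~1.1 of \cite{BMM:Concentration-LogConcave:2021}, so any self-contained argument is by definition a different route. Your lower bound $\hp = \Omega(1/(1+\sigma))$ is complete and correct, and you are right that it needs no log-concavity beyond the existence of $\sigma^2$. Your reduction of the upper bound to $\sigma^2 = O(1/\hp^2)$ is also sound ($\hp \leq 1$ handles $\sigma \leq 1$, and $\hp\,\sigma = O(1)$ handles the rest), as is the core estimate: the superlevel set $\{i : p_i \geq \hp/2\}$ is an interval containing the mode $m$ (log-concave implies unimodal) with at most $2/\hp$ elements, so it contributes $O(\hp \cdot (1/\hp)^3) = O(1/\hp^2)$ to the second moment about $m$, which dominates $\sigma^2$.

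The genuine gap is the tail step, which you correctly identify as the crux but do not close: ``compare against the slowest-decaying geometric sequence consistent with peak $\hp$ and mass at most $1$'' is a plausible extremal principle, but you give no rearrangement that preserves the peak, the mass and log-concavity while only increasing the variance, and as you yourself note the naive geometric bound from a single ratio fails for flat-topped distributions. A concrete way to finish that avoids any extremal argument: set $L = \lfloor 2/\hp \rfloor + 1$, so that $p_{m+L} < \hp/2$ by the core estimate. Log-concavity gives $p_{a+L}/p_a \leq p_{b+L}/p_b$ whenever $a \geq b$, hence $p_{m+(j+1)L}/p_{m+jL} \leq p_{m+L}/p_m \leq 1/2$ for all $j \geq 0$, and therefore $p_{m+jL} \leq \hp\, 2^{-j}$. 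Since the pmf is non-increasing to the right of $m$, every $k$ with $jL \leq k < (j+1)L$ satisfies $p_{m+k} \leq \hp\, 2^{-j}$, so
\[
\sum_{k \geq 0} k^2 p_{m+k} \;\leq\; \sum_{j \geq 0} L\,\big((j+1)L\big)^2\,\hp\, 2^{-j} \;=\; O(\hp L^3) \;=\; O(1/\hp^2),
\]
and symmetrically to the left of $m$. With this (or an equivalent doubling argument) your upper bound goes through; as written, the tail comparison is a missing step rather than a routine detail.
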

\begin{lemma}
    \label{lem:hp-vs-pμ}
    $\frac{\hp}{e} < p_μ ≤ \hp$.
\end{lemma}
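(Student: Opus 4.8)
The upper bound $p_\mu \le \hp$ is immediate: $p_\mu$ is the larger of two particular entries of the pmf, whereas $\hp$ is the maximum over all entries. All the content is in the lower bound, which I plan to deduce from a Jensen-type inequality together with an entropy bound for log-concave distributions.

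Let $m$ be the mode, so that $p_m=\hp$, and let $\ell$ denote the piecewise-linear interpolation of the points $(i,\log p_i)$ over the support; log-concavity of $(p_i)$ is precisely concavity of $\ell$. Since $X$ is integer-valued we have $\mathbb{E}[\log p_X]=\mathbb{E}[\ell(X)]$, so Jensen's inequality yields $\mathbb{E}[\log p_X]\le \ell(\mu)$. Furthermore $\ell(\mu)$ is a convex combination of $\log p_{\lfloor\mu\rfloor}$ and $\log p_{\lceil\mu\rceil}$, hence at most their maximum, which is $\log p_\mu$. Chaining these bounds gives $\log p_\mu\ge \mathbb{E}[\log p_X]=-H(X)$, where $H(X)=-\sum_i p_i\log p_i$ is the entropy in nats. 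It therefore suffices to establish the entropy bound $H(X)<1-\log\hp$, since then $p_\mu\ge e^{-H(X)}>\hp/e$.

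To turn this into a concrete inequality, write $p_i=\hp\,e^{-\phi_i}$ with $\phi_i=\log(\hp/p_i)\ge0$; then $(\phi_i)$ is a convex sequence attaining its minimum value $0$ at the mode, and a one-line computation gives $H(X)=\log(1/\hp)+\mathbb{E}[\phi_X]$. Thus $H(X)<1-\log\hp$ is equivalent to $\mathbb{E}[\phi_X]<1$. Since $\mathbb{E}[\phi_X]=\hp\sum_i \phi_i e^{-\phi_i}$ while $1=\sum_i p_i=\hp\sum_i e^{-\phi_i}$, this is in turn equivalent to the clean statement $\sum_i g(\phi_i)<0$, where $g(\phi)=(\phi-1)e^{-\phi}$.

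Proving $\sum_i g(\phi_i)<0$ for every convex sequence $(\phi_i)$ with minimum $0$ is the crux. The function $g$ is negative on $[0,1)$, positive on $(1,\infty)$, and satisfies $\int_0^\infty g(\phi)\,d\phi=0$; because $(\phi_i)$ is convex and bottoms out at $0$, its values cluster densely near $0$ and spread apart for large $\phi$, which over-weights the negative part of $g$ against the positive part. I would make this rigorous either by a single-crossing correlation estimate—comparing the sum to the integral of $g$ along the convex interpolation of $(\phi_i)$, which is $\le0$ because there $g$ is integrated against a non-increasing reciprocal-slope weight—or by a smoothing argument on the increments $\phi_i-\phi_{i-1}$ showing that the geometric law (for which $(\phi_i)$ is linear) is extremal. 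That extremal case evaluates to $\sum_i g(\phi_i)\to 0^-$ as the geometric parameter tends to $1$, which simultaneously explains the constant $e$ and delivers the strict inequality for every genuine distribution. The main obstacle is handling the discrete-versus-continuous discrepancy (equivalently, justifying extremality) cleanly; should the bound $H(X)\le\log(e/\hp)$ for discrete log-concave variables already be available in the literature, one could instead cite it and stop at the end of the second paragraph.
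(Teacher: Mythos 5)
Your reduction is sound as far as it goes: the Jensen step $\log p_\mu \ge \ell(\mu) \ge \mathbb{E}[\ell(X)] = -H(X)$ is correct, and so is the algebra identifying the target $H(X) < \log(e/\hp)$ with $\sum_i (\phi_i-1)e^{-\phi_i} < 0$ for the convex sequence $\phi_i = \log(\hp/p_i)$. But that last inequality \emph{is} the entire content of the lemma, and your proposal does not prove it; it names two candidate strategies and concedes that making either rigorous is ``the main obstacle.'' This is a genuine gap, and not a small one. The inequality $\mathbb{E}[\phi_X] < 1$ is tight in more than one regime: it is approached not only along geometric laws but also along the symmetric laws $p_i \propto e^{-c|i|}$ as $c \downarrow 0$, where one computes $\mathbb{E}[\phi_X] = -2q\log q/(1-q^2)$ with $q = e^{-c}$, tending to $1$ from below. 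That is a regime in which the lemma itself is trivial (the mean coincides with the mode), so your intermediate target is strictly stronger than what is needed on part of the parameter space, and the asserted extremality of the geometric law among all convex sequences bottoming out at $0$ is precisely the kind of statement that requires a careful argument rather than a plausibility sketch. (A further sign the sketch has not been checked: for the geometric family it is $\hp\sum_i g(\phi_i) = \mathbb{E}[\phi_X]-1$ that tends to $0^-$, while $\sum_i g(\phi_i)$ itself tends to $-1/2$.)

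For comparison, the paper avoids entropy entirely. After normalising so that $\mu \in [0,1)$ and the mode $\hi$ is negative, it replaces $(p_i)$ by the explicit sequence that vanishes for $i < \hi$ and decays geometrically through the two points $(\hi,\hp)$ and $(0,p_0)$; this pointwise replacement can only increase the mean (keeping it nonnegative) and can only decrease the ratio $p_\mu/\hp$, so the geometric case is extremal by direct comparison, and that case is a short calculus computation giving $(1-\lambda)^{1/\lambda-1} > 1/e$. In other words, the paper proves the extremality you assume, by an elementary construction. If the discrete log-concave entropy bound $H(X) \le \log(e/\hp)$ is available in the literature (it is close in spirit to the results already cited for \cref{lem:hp-large}), citing it would close your gap and yield a clean alternative proof; as written, however, the argument is a correct reduction to an unproved statement.
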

\begin{proof}[Proof of \cref{lem:hp-vs-pμ}.]
    The inequality $p_μ ≤ \hp$ is true by definition. We have to show $\frac{p_μ}{\hp} > 1/e$. We may assume without loss of generality that $p_μ < \hp$ (otherwise we are done), that $\hp = p_{\hi}$ for some $\hi < μ$ (if $\hi > μ$ just mirror the setup) and that $μ ∈ [0,1)$ (otherwise shift the setup). Consider the illustration in \cref{fig:log-concave-pμ}.

    \begin{figure}[htbp]
        \begin{minipage}{0.45\textwidth}
            \includegraphics{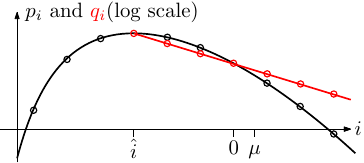}
        \end{minipage}~
        \begin{minipage}{0.5\textwidth}
            \caption{Some log-concave pmf $(p_i)_{i ∈ ℤ}$ (black) and a modified function $(q_i)_{i ∈ ℤ}$ (red) that leads, after normalisation, to a pmf where the ratio of $p_μ$ and $\hp$ is smaller.}
            \label{fig:log-concave-pμ}
        \end{minipage}
    \end{figure}
    The numbers $(q_i)_{i ∈ ℤ}$ are defined as
    \[q_i = \begin{cases}
        0 & i < \hi\\
        \hp\big(\frac{p₀}{\hp}\big)^{\frac{i - \hi}{-\hi}} & i ≥ \hi
    \end{cases}\]
    The values $q_i$ for $i ≥ \hi$ form a geometrically decreasing sequence and appear in the logarithmic plot as a straight line through $(\hi,p_{\hi})$ and $(0,p₀)$. The zero values for $i < \hi$ cannot be shown. Since we have decreased values for negative $i$ and increased values for positive $i$ we know
    \[[0,1) \ni μ = \sum_{i ∈ ℤ} i p_i ≤ \sum_{i ∈ ℤ} i q_i.\]
    By normalising $(q_i)_{i ∈ ℤ}$ we obtain a pmf $(q_i')_{i ∈ ℤ}$ with expectation $μ'$ of the same sign as $μ$, hence $μ' ≥ 0$. By construction and monotonicity we have
    \[ \frac{q_{μ'}'}{\hq'}
    = \frac{q_{μ'}'}{q'_{\hi}}
    ≤ \frac{q₀'}{q'_{\hi}}
    = \frac{q₀}{q_{\hi}}
    = \frac{p₀}{p_{\hi}}
    = \frac{p_μ}{\hp}.
    \]
    In this sense the (shifted) geometric distribution $(q'_i)_{i ∈ ℤ}$ at least as extreme an example as $(p_i)_{i ∈ℤ}$ so it is without loss of generality when we assume that $(p_i)_{i ∈ ℤ}$ is a geometric distribution to begin with (not shifted from now on for clarity). Let $λ ∈ (0,∞)$ be its parameter. We then have $p_i = 0$ for $i ≤ 0$ and $p_i = (1-λ)^{i-1}λ$ for $i > 0$. This gives $\hp = p₁ = λ$ and $μ = 1/λ$. Moreover
    \[
          \frac{p_μ}{\hp}
        = \frac{p_{⌊μ⌋}}{λ}
        = \frac{(1-λ)^{⌊μ⌋-1}λ}{λ}
        ≥ (1-λ)^{μ-1}
        ≥ (1-λ)^{1/λ-1}.
    \]
    Basic calculus shows that the function $f(λ) = (1-λ)^{1/λ-1}$ is strictly monotonic in $λ$ on $(0,1)$ with $\lim_{λ ↓ 0} f(λ) = 1/e$ and $\lim_{λ ↑ 1} f(λ) = 1$. In particular $p_μ / \hp > 1/e$ as desired.
\end{proof}

We can finally proof the lemma needed in the main theorem.
\begin{lemma}
    \label{lem:prob-N_Z=αn}
    If $α$ and $λ$ are viewed as constants with $α > λ$ then
    $\Pr[N_Z = αn] = Θ(1/\sqrt{n})$.
\end{lemma}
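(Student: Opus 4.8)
The plan is to read the claim directly off the log-concavity toolkit developed above. By \cref{cor:N_Z-log-concave} the pmf of $N_Z$ is log-concave, so \cref{lem:hp-large} and \cref{lem:hp-vs-pμ} both apply to it. The strategy has three steps: first, identify the mean $μ$ of $N_Z$ and observe that $\Pr[N_Z = αn]$ is \emph{exactly} the quantity $p_μ$ from \cref{lem:hp-vs-pμ}; second, determine the standard deviation $σ$ of $N_Z$; and third, chain the two lemmas to convert the asymptotics of $σ$ into asymptotics of $p_μ$.

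For the first step, each $Z_i ∼ Φ(α,d)$ is tuned to satisfy $𝔼[Z_i] = α$, so linearity of expectation gives $μ = 𝔼[N_Z] = nα = αn$, which is an integer (it counts balls). Hence $⌊μ⌋ = ⌈μ⌉ = αn$ and $p_μ = \max\{p_{⌊μ⌋}, p_{⌈μ⌉}\} = \Pr[N_Z = αn]$, so the target probability literally equals $p_μ$. For the second step, the $Z_i$ are independent and identically distributed with a fixed, non-degenerate distribution — a truncated Poisson with parameter $λ ∈ (0,∞)$ puts positive mass on every integer $≥ d$ — so $\mathrm{Var}[Z₁]$ is a positive, finite constant. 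Independence then gives $σ² = \mathrm{Var}[N_Z] = n·\mathrm{Var}[Z₁] = Θ(n)$, whence $σ = Θ(\sqrt{n})$.

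Chaining the lemmas finishes the proof: \cref{lem:hp-large} yields $\hp = Θ(1/(1+σ)) = Θ(1/\sqrt{n})$, and \cref{lem:hp-vs-pμ} gives $p_μ = Θ(\hp)$ since $\hp/e < p_μ ≤ \hp$ traps the two within a constant factor. Therefore $\Pr[N_Z = αn] = p_μ = Θ(1/\sqrt{n})$, as claimed.

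I do not expect a genuine obstacle here, since the two cited/proved lemmas do all the heavy lifting; the work of the lemma is really just to verify that its hypotheses match theirs. The only points deserving a word of care are the integrality of $μ = αn$ (needed so that $p_μ$ is the probability of hitting the mean exactly, rather than the larger of two neighbouring point masses) and the fact that $\mathrm{Var}[Z₁]$ is a genuine constant bounded away from $0$ and $∞$. Both are immediate from the definition of $Φ(α,d)$ and the constancy of $α$ and $λ$, so the entire substance of the argument sits inside the imported bound \cref{lem:hp-large} and the peak-versus-mean comparison \cref{lem:hp-vs-pμ}.
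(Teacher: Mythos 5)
Your proposal is correct and follows exactly the same route as the paper's own proof: invoke \cref{cor:N_Z-log-concave} for log-concavity, compute $μ = αn$ and $σ² = n\cdot\mathrm{Var}(Z_1) = Θ(n)$, and chain \cref{lem:hp-vs-pμ} with \cref{lem:hp-large} to get $\Pr[N_Z = αn] = p_μ = Θ(\hp) = Θ(1/\sqrt{n})$. Your added remarks on the integrality of $αn$ and the non-degeneracy of $\mathrm{Var}(Z_1)$ are correct and slightly more explicit than the paper, but the argument is the same.
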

\begin{proof}
    Since $N_Z$ has a log-concave pmf $(p_i)_{i∈ℤ}$ by \cref{cor:N_Z-log-concave} we can apply the previous two lemmas. We also use $μ = αn$ and $σ² = \mathrm{Var}(N_Z) = \sum_{i = 1}^n \mathrm{Var}(Z_i) = n·\mathrm{Var}(Z₁) = Θ(n)$.
    \def\refrel#1#2{\stackrel{\text{Lem.\ref{#1}}}{#2}}
    \[ \Pr[N_Z = αn] = p_μ \refrel{lem:hp-vs-pμ}= Θ(\hp)
    \refrel{lem:hp-large}= Θ(1/(1+σ)) = Θ(1/(1+\sqrt{n})) = Θ(1/\sqrt{n}).\qedhere\]
\end{proof}

\bibliography{bibliography}

\appendix

\section{Sagemath code and tabulated values}
\label{app:tables}
\lstset{
    frame=single,
    backgroundcolor=,
    frameround=fttt, 
    framesep=5pt, 
}

\begin{lstlisting}[mathescape=true,frame=single,caption={Sagemath code for computing $b = b(α,d)$.},captionpos=b]
if $α$ < d:
    b = NaN
elif $α$ == d:
    b = d**d/e**d/factorial(d)
else:
    f($λ$) = e**$λ$-sum([$λ$**i/factorial(i) for i in range(d)])
    $λ$ = find_root($λ$+$λ$**d/factorial(d-1)/f($λ$)==$α$,0,$α$) #$𝔼$[Z]=$α$
    $ζ$ = $λ$**-d*f($λ$)
    b = $α$**$α$*$ζ$/e**$α$/$λ$**($α$-d)
\end{lstlisting}

\begin{table}[htbp]
\begin{tabular}{cccccc}
    \toprule
        $α$ \textbackslash\ $d$ & 1 & 2 & 3 & 4 & 5\\
    \midrule
    1 &
    0.3679 &
    - &
    - &
    - &
    - \\ 2 &
    0.8359 &
    0.2707 &
    - &
    - &
    - \\ 3 &
    0.9457 &
    0.7351 &
    0.2240 &
    - &
    - \\ 4 &
    0.9810 &
    0.8933 &
    0.6648 &
    0.1954 &
    - \\ 5 &
    0.9931 &
    0.9562 &
    0.8472 &
    0.6119 &
    0.1755\\
    \bottomrule
\end{tabular}
    \caption{Approximate values of $b = b(α,d)$ for some pairs $(α,d)$. Note that despite the selection here, non-integer values of $α$ are in principle permitted.}
\end{table}

\end{document}